 \newtheorem*{cor}{Corollary}
 \newtheorem*{proposition}{Proposition} \theoremstyle{definition}
  \theoremstyle{definition}
 \theoremstyle{definition}
 \theoremstyle{definition}
 \newtheorem*{rem}{Remark}
\newtheorem{rem*}{Remark}
 \numberwithin{equation}{section}
 \newcommand{\nc}{\newcommand}
\nc{\ssn}{\subsection{}} \nc{\sssn}{\subsubsection{}}
\newcommand{\Ho}{\operatorname{H}}
\newcommand{\tU}{{\widetilde{\U}_q}}
\newcommand{\tW}{{\tilde{\W}}}
\newcommand{\Ind}{{\operatornamewithlimits{Ind}}}
\newcommand{\Indd}{{\operatornamewithlimits{Ind}\,}}
\newcommand{\res}{\operatorname{res}}
\newcommand{\Ker}{\operatorname{Ker}}
\newcommand{\Coker}{\operatorname{Coker}}
\newcommand{\Hom}{\operatorname{Hom}}
\newcommand{\End}{\operatorname{End}}
\newcommand{\Ext}{\operatorname{Ext}}
\newcommand{\Mat}{\operatorname{Mat}}
\newcommand{\Ann}{\operatorname{Ann}}
\newcommand{\Aut}{\operatorname{Aut}}
\newcommand{\Jm}{\operatorname{Im}}
\newcommand{\rank}{\operatorname{rank}}
\newcommand{\gr}{\operatorname{gr}}
\newcommand{\mood}{\hbox{\ensuremath{
\operatorname{mod}}}}
\newcommand{\mGq}{\hbox{\ensuremath{
\operatorname{Mod}(G_q)}}}
\newcommand{\mBq}{\hbox{\ensuremath{
\operatorname{Mod}(B_q)}}}
\newcommand{\MOD}{\ensuremath{\text{-mod}}}
\nc{\dirlim}{\underset{\rightarrow}{\operatorname{lim}}}
\nc{\invlim}{\underset{\leftarrow}{\operatorname{lim}}}
\newcommand{\IndBG}{\ensuremath{\Ind^{G_q}_{B_q}}}
\newcommand{\I}{\ensuremath{\operatorname{I}}}
\newcommand{\Z}{\ensuremath{\operatorname{Z}}}
\newcommand{\Zl}{\ensuremath{\operatorname{Z}^{(\ell)}}}
\newcommand{\ZHC}{\ensuremath{\operatorname{Z}^{HC}}}
\newcommand{\isoto}{\ensuremath{\overset{\sim}{\longrightarrow}}}
\newcommand{\C}{\ensuremath{{\mathbb{C}}}}
\newcommand{\AL}{\ensuremath{{\mathcal{A}\Lambda}}}
\newcommand{\D}{\ensuremath{\mathcal{D}}}
\newcommand{\Dq}{\ensuremath{\mathcal{D}_q}}
\newcommand{\pql}{\ensuremath{\phi^\lambda_{q}}}
\newcommand{\pAl}{\ensuremath{\phi^\lambda_{\A}}}
\newcommand{\m}{\ensuremath{\mathfrak{m}}}
\newcommand{\fin}{\ensuremath{{\hbox{\tiny{int}}}}}
\newcommand{\g}{\ensuremath{\mathfrak{g}}}
\newcommand{\bo}{\ensuremath{\mathfrak{b}}}
\newcommand{\bobar}{\ensuremath{\overline{\mathfrak{b}}}}
\newcommand{\n}{\ensuremath{\mathfrak{n}}}
\newcommand{\nbar}{\ensuremath{\overline{\mathfrak{n}}}}
\newcommand{\h}{\ensuremath{\mathfrak{h}}}
\newcommand{\BGG}{\ensuremath{\mathcal{O}}}
\newcommand{\Mq}{\ensuremath{\wt{M}_{q}}}
\newcommand{\MA}{\ensuremath{\wt{M}_{\A}}}
\newcommand{\MAf}{\ensuremath{\wt{M}_{\A_f}}}
\newcommand{\MAl}{\ensuremath{{M}_{\A}{(\lambda)}}}
\newcommand{\MAfl}{\ensuremath{{M}_{\A_f}{(\lambda)}}}
\newcommand{\Mql}{\ensuremath{{M}_{q}{(\lambda)}}}
\newcommand{\mSpec}{\ensuremath{\operatorname{Maxspec}}}
\newcommand{\UA}{\ensuremath{\U_\mathcal{A}}}
\newcommand{\UAl}{\ensuremath{\U^\lambda_\mathcal{A}}}
\newcommand{\Uonel}{\ensuremath{\U^\lambda_1}}
\newcommand{\tUA}{\ensuremath{\widetilde{\U}_\mathcal{A}}}
\newcommand{\UAf}{\ensuremath{\U^\fin_\mathcal{A}}}
\newcommand{\Uonef}{\ensuremath{\U^\fin_1}}
\newcommand{\UAres}{\ensuremath{\U^{\res}_\mathcal{A}}}
\newcommand{\A}{\ensuremath{\mathcal{A}}}
\newcommand{\OA}{\ensuremath{\mathcal{O}_\mathcal{A}}}
\newcommand{\W}{\ensuremath{{\mathcal{W}}}}
\newcommand{\U}{\ensuremath{\operatorname{U}}}
\newcommand{\Uq}{\ensuremath{\operatorname{U}_q}}
\newcommand{\Uql}{\ensuremath{\operatorname{U}^\lambda_{q}}}
\newcommand{\Uqf}{\ensuremath{\operatorname{U}^{{\fin}}_q}}
\newcommand{\Dql}{\ensuremath{\D^\lambda_{q}}}
\newcommand{\Uqres}{\ensuremath{\U^{\res}_q}}
\newcommand{\DBGl}{\ensuremath{\operatorname{Mod}(\Dq,B_q,\Uq(\n), \lambda)}}
\newcommand{\wt}{\ensuremath{\widetilde}}
\newcommand{\ot}{\ensuremath{\otimes}}
\newcommand{\uq}{\ensuremath{\mathfrak{u}_q}}
\newcommand{\bq}{\ensuremath{\mathfrak{u}_q}(\bo)}
\newcommand{\Oq}{\ensuremath{\mathcal{O}_q}}
\nc{\til}{\widetilde} \nc{\oppA}{A^{\sharp}} \nc{\Lemma}{{\bf Lemma.\
}} \nc{\Theorem}{{\bf Theorem.}\ } \nc{\Coro}{{\bf Corollary.}\ }
\nc{\Def}{{\bf Definition.}\ } \nc{\Prop}{{\bf Proposition.}\ }
\nc{\Con}{{\bf Conjecture.}\ } \nc{\Rem}{{\bf Remark.}\ }
\nc{\dok}{\noindent{\bf Proof.}\ }
\begin{document}
\title[Global quantum differential operators]{Global quantum differential operators on quantum flag manifolds, theorems of Duflo and
Kostant.}
\address{Erik Backelin, Departamento de Matem\'{a}ticas, Universidad de los Andes,
Carrera 1 N. 18A - 10, Bogot\'a, Colombia}
\email{erbackel@uniandes.edu.co}
\address{Kobi Kremnitzer, Mathematical Institute, University of Oxford, 24ￄ1�7 St Giles'
Oxford OX1 3LB, UK}
 \email{kremnitzer@maths.ox.ac.uk}
  \subjclass[2000]{Primary 14-A22, 17B37, 53B32}
\date{}
\author{Erik Backelin and Kobi Kremnizer}
\begin{abstract} We give a new proof for the theorem that global sections of the sheaf of
quantum differential operators on a quantum flag manifold are given by the quantum group and that its derived global sections vanish. As corollaries we retrieve Joseph and Letzter's quantum versions of classical
enveloping algebra theorems of Duflo and Kostant. We also describe the center of the ad-integrable part of the  quantum group and the adjoint Lie algebra action on it.
\end{abstract}
\maketitle

\section{Introduction.}

\subsection{Summary.}
\subsubsection{} In \cite{BK06,BK08, BK11} we developed a localization theory for quantum groups,
that is,  a theory of quantum $\mathcal{D}$-modules on quantum
flag manifolds, for the purpose of attaining a better
understanding of their representation theory. A key result was the
computation of the global sections of the sheaf of quantum
differential operators and the vanishing of its higher
cohomologies.

Here we give new proofs for these facts, Theorem \ref{correct global sections root of unity}, Corollary \ref{correct
global sections} and Theorem \ref{higher vanishing of Verma}, that
we hope are conceptually clear and reasonably simple. We first establish the results at roots of unity. 
In this case our quantum objects are free and generically Azumaya over their centers, which are flag manifolds, their cotangent bundles and related algebro-geometric objects. 
We use known facts about these and about their sheaves of classical (non-quantum) differential operators to derive our result.

We then extend to other $q$ (more precisely, to all generic $q$ except perhaps a finite number of algebraic numbers) by means of an integral form. This is very natural as roots
of unity are Zariski dense.

\subsubsection{}
The computation of global sections in \cite{BK06} had some gaps at a root of unity. For instance, the argument in step i) in the proof of Proposition 4.8 is flawed at a root of unity since the primitive quotient of the quantum group doesn't act faithfully on a Verma module then.  Moreover,
our computation was largely based on the important but difficult papers \cite{JL92, JL94} of Joseph and Letzter.
These papers are not formulated in terms of integral forms and it is unclear whether their arguments - and hence our computation - really work at a root of unity.

On the other hand, the present
paper is (besides, of course, using and being inspired by some of their ideas) independent of their work. In fact, as corollaries
of Theorem \ref{correct global sections} we reprove - and at a
root of unity give the first complete proof for - their main
results: A quantum version of Kostant's separation of variables
theorem, \cite{K63}, and a quantum Duflo formula for Verma module
annihilators. These basic structure results for the quantum group
occur in quantum localization theory just as their classical
counterparts do in Beilinson-Bernstein localization theory for Lie
algebras, \cite{BB81}.

It is also worth mentioning that we here, Theorem \ref{higher vanishing of Verma}, establish vanishing of higher global sections at all odd roots of unity ($\geq$ Coxeter number). In \cite{BK08}
we only established this for roots of unity of prime order by specializing to the modular case of \cite{BMR}.

\smallskip

We also believe that our method will have other interesting
applications; such as in studying quantized
multiplicative quiver varieties \cite{J10}.

\subsubsection{} Another theme that we pursue is to describe the center of the \textit{ad-integrable} part of
the quantum group (i.e. the maximal subalgebra on which the
adjoint action is integrable) in terms of the algebra of functions
on a semi-simple group,  Section \ref{centers}. Our results here
extend those of \cite{CKP92} for the usual quantum group.  We
shall use them in a future paper about localization theory at a
root of unity, where it is natural to consider both the
(ad-integrable) quantum group and quantum differential operators
as sheaves over their centers.

\subsection{Preliminaries.}
\subsubsection{} We shall use the notations of  \cite{BK11}, where more
detailed background material can also be found. Let $\g$ be a
semi-simple Lie algebra,  $c_\g$ the maximal Coxeter number of the
simple factors of $\g$ and fix a Cartan subalgebra $\h \subset
\g$. In fact, semi-simplicity is assumed just to simplify
notations; our results do generalize to a reductive $\g$ with minor
modifications.

We have the DeConcini-Kac quantum group $\Uq := \Uq(\g)$ and
Lusztig's integral form $\Uqres := \Uqres(\g)$ (that contains
divided powers).  Let $\Lambda_r \subset \Lambda \subset \h^*$ be the root
lattice contained in the weight lattice. We use simply connected versions; thus the toral
part $\U^0_q$ of $\Uq$ is the group algebra $\C \Lambda$. Let
$\Lambda \ni \gamma \mapsto K_{\gamma} \in \C\Lambda$ denote the
canonical embedding. Let $T_\Lambda := \mSpec  \C\Lambda =
\Hom_{\C\text{-}alg}(\C\Lambda, \C)$.

Let $\n \subset \bo$ be a Borel subalgebra of $\g$ and its
unipotent radical, so that $\bo = \h \oplus \n$, and let
$\Uq(\bo)$ and $\Uq(\n)$ be their quantizations. $\bobar$ is the
opposite Borel and $\nbar$ its unipotent radical. We let $\Oq :=
\Oq(G)$ be the finite dual of $\Uqres$. Let $\Oq(B)$ be the dual
of $\Uq(\bo)$ which is a quotient Hopf algebra of $\Oq$. $\Oq(T)$ is the dual of $\Uq(\h)$.  $\Oq$
comes with a natural right action of $\Uqres$ which integrates to
a $G_q$-action. We abbreviate right $G_q$-action for a left
$\Oq(G)$-coaction. By $G_q$-modules we mean right $G_q$-modules and denote by $\mGq$ the category they form; same thing with $B_q$ and $T_q$. For $V \in \operatorname{Mod}(T_q)$ we denote by $\Lambda(V) \subseteq \Lambda$ its set of weights.

\subsubsection{}
Let  $\Mq := \Uq \ot_{\Uq(\n)} \C$ be a universal Verma module for
$\Uq$. For $\lambda \in T_\Lambda$ there is the corresponding
one-dimensional representation $\C_\lambda$ of $\Uq(\bo)$ and the
Verma module $\Mql := \Uq \ot_{\Uq(\bo)} \C_\lambda = \Mq
\ot_{\C\Lambda} \C_\lambda$. They have obvious left $\Uq$-actions.
Consider the right adjoint action of $\Uqres(\bo)$ on $\Uq$. It induces
integrable $\Uqres(\bo)$-actions on the quotients $\Mq$ and
$\Mql$.  Thus, $\Mq, \Mql \in \mBq$.  

Let $(\Lambda_r)_+ \subset \Lambda_r$ be the
semi-group generated by the positive roots and $\Lambda_{+} \subset \Lambda$ the
positive weights. By convention we have here chosen the positive roots such that $\Lambda(\Mq) = \Lambda(\Mql) = - \Lambda_r$. 
\emph{Note in particular that $M_q(\lambda)$ has highest weight $0$ with respect
to this right adjoint $B_q$-action!}

\subsubsection{}
Let $t$ be a variable and let $\A$ be the algebra $\C[t,t^{-1}]$
localized at  all $t-q$, where $q \neq 1$ runs over the roots of
unity of order $\leq c_\g$ and over all even roots of unity. 
In general, $q$ will denote a non-zero complex number. We shall use the
dictionary ``all $q$" = ``all $q \in \C^*$ except roots of unity of
even order or of order $\leq c_\g$", ``generic $q$" = ``all $q \in \C^*$
except roots of unity". By convention $1$ is not a root of unity
(thus $1$ is generic). From now on roots of unity are supposed to
be primitive of odd order $\geq c_\g$.

Let $\C_q := \A/(t-q)$, so
that specialization $t \mapsto q$ is given by the functor $( \ )_q
:= ( \ ) \ot_\A \C_q$. All the $q$-forms previously introduced
admit natural  $\A$-forms:  $\UA, \UA^{\res}, \UA(\bo), \OA$,
$\MA$, etc. We put $T_{\Lambda, \A} :=
\Hom_{\A\text{-}alg}(\A\Lambda, \A)$. We shall consider
$T_\Lambda$ as a subset of $T_{\Lambda, \A}$ by means of the
inclusion $\C \hookrightarrow \A$. This way we get the $\A$-form
$\MAl := \MA \ot_{\AL} \A_\lambda$, for $\lambda \in T_\Lambda$.

\subsubsection{}
Let $\W$ be the Weyl group, let $\Delta$ be the simple roots and
let $\omega_\alpha$ be the fundamental weight corresponding to
$\alpha \in \Delta$. Let $\Theta \subset \Aut \C\Lambda$ be the
group generated by the maps $$\tau_\alpha: K_{\omega_\beta}
\mapsto (-1)^{\delta_{\alpha,\beta}}K_{\omega_\beta},$$ for
$\alpha, \beta \in \Delta$. Let $\tW = \Theta \rtimes \W$ be the
extended Weyl group.  $\tW$ acts on $\C\Lambda$ and we have
$\C\Lambda^{\Theta} = \C 2\Lambda$ and $\C\Lambda^{\tW} = \C
2\Lambda^\W$. Here, and always, the $\W$-invariants are taken with
respect to the $\bullet$-action. Let $\ZHC$ be the Harish-Chandra
center of $\Uq$ and $\chi: \ZHC \isoto \C\Lambda^{\tW}$ the
Harish-Chandra isomorphism. This has an $\A$-form $\chi: \ZHC_\A := \Z(\UA) \isoto \A\Lambda^{\tW}$.

\subsubsection{} \label{commuting some stuff}
For any (say right) $\UAres$-module $M$ we put $$M^\fin := \{m \in M; ad_{\UAres}(m) \hbox{ is f.g. over } \A\}.$$
Then the $\UAres$-action on $M^\fin$ integrates to a $G_\A$-action. Similarly, for a $\Uqres$-module $N$ we put
$N^\fin := \{n \in N; \dim ad_{\Uqres}(n) < \infty\}$. Then we have $(M^\fin)_q = (M_q)^\fin$, for $q \in \C^*$.  Here and always $\dim = \dim_\C$.

One checks that $\UAf$ is a subalgebra of $\UA$. We put $\UAl := \UAf/(\Ker \chi_\lambda)$ and $\Uql :=
\Uqf/(\Ker \chi_\lambda) = (\UAl)_q$.  We have
\begin{equation}\label{crucial eq}  \Uonef \isoto \U(\g)
\end{equation}
Indeed,
a simple computation, e.g. \cite{BK11}, shows that $\Uqf \cap \C\Lambda = \C 2\Lambda_{+}$ (in fact,  one has $\Uq = \Uqf \ot_{\C 2\Lambda_{+}}
\C\Lambda$).
It is well-known that $\U_1 \cong \U(\g)[K_{\omega_\alpha}; \alpha \in \Delta]/(K^2_{\omega_\alpha} - 1; \alpha \in \Delta)$.
$\U_1 = \U^{\res}_1$ acts adjointly on both sides.
Taking $ad_{\U_1}$-integrable parts \ref{crucial eq} follows.

Consider the extended algebra
$$\tUA := \UAf \ot_{\ZHC_\A} \A
\Lambda.$$
$\A\Lambda$ is free over $\ZHC_\A$ and hence $\tUA$ is free over $\UAf$.
Put $\tU := (\tUA)_q = \Uqf \ot_{\ZHC} \C
\Lambda$.

\subsubsection{}
The sheaf of extended (resp. $\lambda$-twisted) quantum differential operators on the quantum flag
manifold is by definition the $B_q$-equivariant (for the diagonal $B_q$-action) $\Oq$-module $\Dq
:= \Oq \ot \Mq$ (resp. $\Dql := \Oq \ot \Mql$). These are objects in certain categories of quantum
$\D$-modules on the quantum flag manifold, \cite{BK06}. 

We don't recall the definition of these categories here but mention
that the significance of $\Dq$ and $\Dql$ is that they represent the global section functors $\Gamma$ (which in equivariant language is the functor $( \, \;
)^{B_q}$ of taking $B_q$-invariants.) on them. As the usual sheaf of differential operators represents global section
on the category of $\D$-modules this also justify their names.

There is also the induction
functor $\Ind := \IndBG: \mBq \to \mGq$,  see
\cite{APW}. $\mBq$
has enough injectives and so there is the derived functor $R\Ind$.
We have $R\Indd \Mq = R\Gamma(\Dq)$ and $R\Indd \Mql  =
R\Gamma(\Dql)$.

For $\mu \in \Lambda_+$ let $\Ho^0_q(\mu) = \Indd \C_{-\mu}$ be the corresponding dual Weyl module.

Again, these constructions and results have $\A$-versions and we also write $\Ind$ for $\Ind^{G_\A}_{B_\A}$.

\subsubsection{}
By \cite{DP92}, Section 10, $\UA$ has an exhaustive filtration of
finitely generated $\A$-submodules whose associated graded ring is
generated by a finite set of skew-commutative variables. Thus
$\UAf$ has such a filtration as well. It follows that $\UA$,
$\UAf$ and $\tUA$ are noetherian and \emph{generically flat} over
$\A$,  i.e. for any finitely generated module $M$ over one of these
rings there exists $0 \neq f \in \A$ such that $M_f$ is free over
$\A_f$, see \cite{M90}.

\section{Global sections of the sheaf of quantum differential operators on the quantum flag manifold.}\label{Global sections of the sheaf of quantum differential operators on the quantum flag manifold}
\subsection{{}} Here we construct the map $\phi = \phi_\A: \tUA \to \Indd \MA$.
\subsubsection{} The identifications
$$\Indd  \MA = (\OA \ot \MA)^{B_\A} = \End_{\operatorname{Mod}(\D_\A, B_\A, \U_\A(\mathfrak{n}))}(\OA \ot \MA),$$
where $\operatorname{Mod}(\D_\A, B_\A, \U_\A(\mathfrak{n}))$ is the category of  equivariant $\D_\A$-modules defined in 
\cite{BK11}, and the canonical algebra structure on the right hand side defines an algebra structure on
$\Indd \MA$. 

The image of the embedding $\A\Lambda \hookrightarrow \UA(\bobar)  \cong
\MA$
is invariant under the adjoint $B_\A$-action on $\MA$ and induces therefore an algebra embedding
$$
\alpha: \A\Lambda \cong 1 \ot \A\Lambda \hookrightarrow (\OA \ot \MA)^{B_\A} = \Indd \MA.
$$
We shall henceforth consider $\Indd \MA$ as a right $\A\Lambda$-module by means of $\alpha$.

Consider next the composition $$\epsilon: \UAf
\overset{coact.}{\longrightarrow} \Indd  \UAf \to \Indd  \MA.$$ (Note that $coact.$ - given by the tensor identity - is an isomorphism.) Thus we get the map
$\epsilon \ot \alpha: \UA \ot \A \Lambda \to \Indd \MA$. Since $G_\A$ acts
trivially on $\Z^{HC}_\A$ we have $coact.(z) = 1 \ot z$, for $z \in
\Z^{HC}_\A$. Thus, $\epsilon(z) = 1 \ot \chi(z) = \alpha(\chi(z))$.
Thus $\epsilon \ot \alpha$ descends to the desired algebra map $\phi$.

\subsubsection{}
There are two parameter spaces - $T_\Lambda$ and $\C^*$ - that we
shall need to commute with the induction functor:

In Theorem
\ref{higher vanishing of Verma} we shall see that induction (of the modules we are interested in, namely: $\MA, \MAl, \Mq$ and $\Mql$) commutes with $( \ )  \ot_{\A} \C_\lambda$, for $\lambda \in T_\Lambda$, at least if one avoid a finite number of $q$'s.
The argument given there is independent of the preceding results so we may for now simply ignore this subtlety.

\medskip

On the other hand, for  specialization $t \mapsto q  \in \C^*$ we note that since $\MA$ and $\MAl$ are flat $\A$-modules the natural maps
$$
(\Indd \MA)_q \to \Indd \Mq, \  (\Indd \MAl)_q \to \Indd \Mql
$$
are injective. During the proof of Corollary \ref{correct global sections} we shall see that these are isomorphisms.
In the meantime we must strictly speaking distinguish between the various maps
$$
(\phi_\A)_q: \tU \to (\Indd \MA)_q, \  \phi_q: \tU \to \Indd \Mq,
$$
$$
(\pAl)_q: \Uql \to (\Indd \MAl)_q, \  \pql: \Uql \to \Indd \Mql.
$$
Thus, here the rightmost maps
factors through the leftmost maps on respectively row.
\subsection{{}} In this section we state and prove our main results.
%
%
\subsubsection{}\label{correct global sections root of unity}
\Theorem \emph{Let $q$ be a root of unity. Then $\phi_q$ and $\pql$
are isomorphisms for all $\lambda \in T_\Lambda$.}
\begin{proof} \emph{Step a)}  \emph{$\pql$ is injective for any  $\lambda \in T_\lambda$.}
Let  $\ell$ be the order of $q$ (recall that by assumption $\ell$ is odd and $\geq c_\g$).

Let $\uq$ be Lusztig's
small quantum group (see Section \ref{struct  theory sec}). By Proposition \ref{check iso at root}  we
have that $\pql$ restricts to an isomorphism
\begin{equation}\label{to do natural}
(\pql)^{\uq}: \Z(\Uql) =( \Uql)^{\uq} \isoto (\Ind \, \Mql)^{\uq}.
\end{equation}
This way
$\pql$ becomes a morphism of finite $\Z(\Uql)$-algebras. By
\cite{BG01} $\Uql$ is generically Azumaya over
$\Z(\Uql)$, which means that for a sufficiently generic
$\mathfrak{m} \in \mSpec \Z(\Uql)$ we have $\Uql/(\mathfrak{m})
\cong \Mat_N(\C)$. Hence, if $u \in \Ker \pql$, then for such
$\mathfrak{m}$ the image of $u$ vanishes in each fiber
$\Uql/(\mathfrak{m})$, since $\Mat_N(\C)$ is simple.

Since $\Uql$ is finite over $\Z(\Uql)$, we can find $f \in \Z(\Uql)$
such that the localization $(\Uql)_{f}$ is both free and Azumaya
over $\Z(\Uql)_f$. Since $\Uql$ is an integral domain the canonical
map $\Uql \to (\Uql)_f$ is injective. Since $u$ vanishes in all
fibers of $(\Uql)_f$ over $\mSpec \Z(\Uql)_f$, we get from the
Nullstellensatz that $u =0$ and hence that $\pql$ is injective.

\medskip

\noindent \emph{Step b)}  \emph{Let $\lambda \in T_\Lambda$. Then $\pql$ is an isomorphism for
all $q$ for which $\pql$ is injective, in particular for all roots of unity}.  For a $T_q$-module $V$ and $\mu \in \Lambda$ we let $V^\mu$ be the corresponding weight space.
Since $\pql$  is $T_q$-linear with  respect to the restriction to $T_q$
of the adjoint $G_q$-action on $\Uql$ and the $T_q$-action on $\Indd \Mql$ that is induced from the adjoint $T_q$-action on $\Mql$,  it suffices to
show that $(*)$ $a_{q,\mu} \geq b_{q,\mu} < \infty$ where
$$
a_{q,\mu} := \dim (\Uql)^\mu \hbox{ and } b_{q,\mu} := \dim (\Indd \Mql)^\mu, \; \mu \in \Lambda.
$$
Take a $B_q$-filtration on $\Mql$ satisfying $\gr \Mql = \oplus_{\nu \in \Lambda_r} \C^{\dim \Mql^{-\nu}}_{-\nu}$ (recall we consider the adjoint $B_q$-action on $\Mql$). Put
$b'_{q,\mu} := \dim (\Indd \gr \Mql)^\mu$. Then $b_{q,\mu} \leq b'_{q,\mu}$ for all $\mu \in \Lambda_r$.  Moreover, by the Borel-Weyl-Bott theorem $b'_{q,\mu}$ is constant in $q$ for all $q \in \C^*$
(except the usual exceptional roots of unity).

There are $\lambda', \lambda''  \in \h^*$ such that $\MAl \ot_\A \C_1$ equals the classical Verma module $M(\lambda')$
and the  specialization isomorphism $\U^{\fin}_1 \isoto \U(\g)$ induces an isomorphism
$\Uonel \cong \U^{\lambda''}(\g)$.  (Actually $\lambda' = \lambda''$ but we don't need this.)
By classical Beilinson-Bernstein localization, e.g. \cite{Mi}, and by Kostant's formula we have
$$\dim(\U^{\lambda''}(\g))^\mu =  \dim(\Ind^G_B \, M(\lambda'))^\mu =  \dim(\Ind^G_B \, \gr M(\lambda'))^\mu, \hbox{ for all } \mu \in \Lambda$$
Thus we get
$b'_{1,\mu} \leq a_{1, \mu} < \infty$.  Hence,  $a_{q,\mu} \geq b'_{q,\mu}$  by Lemma  \ref{specialSemisimple} ii) and $(*)$ thus follows.
\footnote{Note that we have proved $b_{q,\mu} = b'_{q,\mu}$. This holds despite the fact that $R^{>0}\Indd \gr \Mql \neq 0$ in general.}

\medskip

\noindent \emph{Step c)} It follows from Schur's Lemma that $\phi_q$ is an isomorphism as well. (Compare with Lemma \ref{specialSchur}.)
\end{proof}

\subsubsection{}\label{correct global sections} For any $\A$-algebra $\A'$ we denote by $\phi_{\A'}$ and $\phi^\lambda_{\A'}$ the maps obtained from $\phi_\A$ and $\pAl$ by base change.
We shall here consider only the case when $\A'$ is flat as an $\A$-module. In this case $\Ind$ commutes with base change.
\Cor \emph{There is an $f \in \A \setminus \{0\}$ such that $\phi_{\A_f}$ and  $\phi^\lambda_{\A_f}$ are isomorphisms for all $\lambda \in T_\Lambda$.
Consequently,  $\phi_q$ and $\pql$
are isomorphisms for all $q$ such that $f(q) \neq 0$.}

\begin{proof} Let $q$ be a root of unity. Then $\phi_q$ is injective by Theorem \ref{correct global sections root of unity} and hence also
$(\phi_{\A})_q$ is injective. Since $\Indd \MA$ is flat over $\A$ and $\UA$ is free over $\A$ this implies that $\phi_\A$ is injective as well.

Let $C = \Coker \phi_\A$.  In the proof of Lemma \ref{spectral sequence lemma} we construct a $\UAf$-module structure on $\Indd M$ for each
$B_\A$-equivariant $\UAf$-module $M$. Note that $\phi_\A$ is $\UAf$-linear and that therefore, by Lemma \ref{spectral sequence lemma}, $C$ is a finitely $\UAf$-module as $\Indd \MA$ is.
By generic flatness we can find $f \in \A$ such that $C_f := C \ot_\A \A_f$ is $\A_f$-free.  Let $q$ be a root of unity such that $f(q) \neq 0$. Then, as $\Coker \phi_q = 0$  by Theorem \ref{correct global sections root of unity}, 
we conclude that $(C_f)_q = 0$ and hence that $C_f = 0$.
Thus $\phi_{\A_f}$ is an isomorphism.

It follows from Schur's lemma that $\phi^\lambda_\A$ is an isomorphism for all $\lambda$.  (Compare with $\emph{Step c)}$ in the proof of Theorem \ref{higher vanishing of Verma} and Lemma \ref{specialSchur}.)

It now also follows that $(\phi_\A)_q = \phi_q$ and $(\pAl)_q = \pql$ and that these are isomorphisms for all $q$ with $f(q) \neq 0$ and all $\lambda \in T_\Lambda$.
\end{proof}

\subsubsection{}\rem Corollary \ref{correct global sections} can be partially strengthened to $\phi_q$ is injective for all generic
$q \in \C^*$. Indeed, let $u \in \tU$ and let $f_1 \ot u_2 \in \Oq \ot \tU$ be the
coaction of $\Oq$ on $u$. Thus,  $\langle f_1, v \rangle  u_2 =
ad_r(v)(u)$, for all $v \in \Uqres = \Uq$, where $\langle \ , \
\rangle$ is the pairing between $\Oq$ and $\Uq$. Then $\phi_q(u) =
f_1 \ot \overline{u}_2$ where $\overline{u}_2$ is the image of
$u_2$ in $\Mq$.

If  $u \in \Ker \phi_q$ we thus get $ad_r(\Uq)(u) \in \Uq \cdot \Uq(\n)_{>0}$. Thus $ad_r(\Uq)(u)$ annihilates any highest weight vector in any $\Uq$-module.
Thus $u$ annihilates any finite dimensional simple $\Uq$-module. It is showed in \cite{Jan96} that this in turn implies $u = 0$.

We believe, of course, that $\phi_q$ is an isomorphism for all generic $q$.

\subsubsection{}\label{higher vanishing of Verma} \Theorem
\emph{\textbf{i)} For all $\lambda \in T_\Lambda$ and all roots of unity $q$ we have
$R^{>0}\Indd \Mql = R^{>0}\Indd \Mq =0$ and the natural map
$(\Indd \Mq)_\lambda \to \Indd \Mql$ is an isomorphism.}

\emph{\textbf{ii)}
There is an $f \in \A \setminus \{0\}$ such that the conclusion of \textbf{i)} holds if $q$ is generic and $f(q) \neq 0$.}

\emph{\textbf{iii)}  Similarly, $R^{>0}\Indd \MAfl = R^{>0}\Indd \MAf =0$ and the
natural map $(\Indd \MAf)_\lambda \to \Indd \MAfl$ is an isomorphism
for all $\lambda \in T_\Lambda$.}
\begin{proof}   $\emph{Step a)}$   \emph{$R^{>0}\Indd \Mql = 0$ for all $\lambda \in T_\Lambda$ and all roots of unity $q$.}
Let $V$ be a finite dimensional $G_q$-module. Then it is well-known that $\Mql \ot V$ has a filtration whose subquotients are isomorphic
to $M_q({\lambda+\mu})$ where $\mu$ run through the weights of $V$. By Proposition \ref{check iso at root} we have $(R^{>0}\Indd M_q(\lambda+\mu))^{\uq} = 0$.
By the tensor-identity $R^{>0}\Indd (\Mql \ot V) = (R^{>0}\Indd \Mql) \ot V$. Hence it follows that
 $$((R^{>0}\Indd  \Mql ) \ot V)^{\uq} = 0.$$  It follows that  $R^{>0}\Ind \Mql = 0$.

\medskip

\noindent
$\emph{Step b)}$ \emph{There is an $f \in \A \setminus \{0\}$ such that $R^{>0}\Indd \Mql = 0$ for all $\lambda \in T_\Lambda$ and all generic $q$ such that $f(q) \neq 0$.}
This is done with similar arguments to those of the proof of Corollary \ref{correct global sections}. Details are left to the reader.

\medskip

\noindent $\emph{Step c)}$ \emph{ $R^{>0}\Indd \Mq = 0$ and the natural map $(\Indd \Mq)_\lambda \to \Indd \Mql$ are isomorphisms for all $\lambda$ and all $q$ that are either roots of unity or satisfies $f(q) \neq 0$.}
Chose an identification $\C\Lambda = \C[x^{\pm}_1, \ldots , x^{\pm}_\ell]$.  Let $0 \leq i \leq \ell$. For $\mu = (\mu_1, \ldots , \mu_i) \in (\C^*)^i$ we define $M_q(\mu) := \Mq/\Mq \cdot (x_1-\mu_1, \ldots , x_i - \mu_i)$.
If $i = 0$ we have $M_q(\mu) = \Mq$ and, for $\lambda \in (\C^*)^\ell = \mSpec \C\Lambda$, $\Mql$ is the Verma module with highest weight $\lambda$ as it was previously defined.
Let us make the induction hypothesis
$$
\I_i: \ \ \ \ \  \  \ \ \ \ \ \ \ \ \ \ \ \ \ \ \ \ \  R^{>0}\Indd M_q(\mu)= 0, \ \forall \mu \in (\C^*)^i.  \ \ \  \  \ \ \ \ \ \ \ \ \ \ \ \ \ \ \ \ \  \ \ \  \  \ \ \ \ \ \ \ \ \ \ \ \ \ \ \ \ \
$$
Then $\I_\ell$ holds by \emph{Step b)} and \emph{Step c)}. We show $\I_i \implies \I_{i-1}$. Fix $\mu' \in (\C^*)^{i-1}$ and put $\mu = (\mu',{\mu_i})  \in (\C^*)^i$,
for ${\mu_i} \in \C^*$. We get exact sequence
$$
0 \to M_q(\mu') \overset{x_i - {\mu_i}}{\to} M_q(\mu') \to M_q(\mu) \to 0,
$$
where the first map is right multiplication by  $x_i-{\mu_i}$ which we note is $\Uqf-B_q$-linear. Applying $\Ind$ we get exact sequences
$$
R^{j}\Indd M_q(\mu') \overset{x_i - {\mu_i}}{\to} R^{j}\Indd M_q(\mu') \to R^{j}\Indd M_q(\mu).
$$
For $j \geq 1$ the last term vanishes by hypothesis so $x_i - {\mu_i}$ is surjective. Since, by Lemma \ref{spectral sequence lemma},  $R^{j}\Indd M_q(\mu')$ is a finitely generated
$\Uqf$-module and $\mu_i \in \C^*$ is arbitrary we get from Lemma  \ref{specialSchur} applied to $x = x_i$ and $R = \Uqf$ that $R^{j}\Indd M_q(\mu') = 0$. This proves the induction step. In particular,
$\I_0$ holds, i.e. $R^{>0}\Indd \Mq = 0$.

It now follows that we get exact sequences
$$
\Indd \Mq(\mu') \overset{x_i - {\mu_i}}{\to} \Indd \Mq(\mu') \to \Indd \Mq(\mu) \to 0,
$$
i.e. $\Indd \Mq(\mu) \cong (\Indd \Mq(\mu'))/(x_i-{\mu_i})$. Hence, by an induction starting with $i = 0$ we conclude that
$(\Indd \Mq)_\lambda \overset{def}{=} (\Indd \Mq)/(x_1-\lambda_1, \ldots , x_\ell - \lambda_\ell) \cong \Indd \Mql$, for $\lambda = (\lambda_1 , \ldots , \lambda_\ell) \in (\C^*)^\ell$.

\medskip

\noindent $\emph{Step d)}$ \emph{The assertions about $\A_f$-forms hold.} This is done by applying $\Indd$ and Schur's lemma to
short exact sequences of the form $\MAf \overset{t-q}{\hookrightarrow} \MAf \twoheadrightarrow \Mq$ and $\MAfl \overset{t-q}{\hookrightarrow} \MAfl \twoheadrightarrow \Mql$. Details are left to the reader.
\end{proof}

\subsubsection{}
\rem
For $\lambda \in T_\Lambda$ dominant one can prove that $R^{>0}\Indd \Mql = 0$ for all generic $q$ as follows:
Let $\DBGl$ be the category of $\lambda$-twisted $\D_q$-modules on the quantum flag manifold as defined in \cite{BK11}.
Assume that $\lambda \in T_\Lambda$ is dominant. The quantum version of the celebrated  ``Beilinson-Bernstein trick",
\cite{BK11}, Theorem 5.1 (or \cite{BK06}, Theorem 4.12), then shows that $R^{>0}\Gamma(V) = 0$ for all  $V \in \DBGl$. In particular we
get
$$
R^{>0}\Ind \Mql = R^{>0}\Gamma(\Dql) = 0,  \hbox{ for } \lambda \hbox{ dominant,}
$$
where  $\Dql := \Oq \ot \Mql$.

This can most likely be extended to all $\lambda$'s by quantizing the intertwining functors of Beilinson and Bernstein, \cite{BB83}, and the derived equivalences they define.  We haven't worked out the details.

\section{Applications to structure theory for the quantum group}\label{struct  theory sec}
\subsection{{}}
\subsubsection{}
The following quantum version of a classic result of Kostant,
\cite{K63}, was originally proved in \cite{JL94}, for a generic
$q$. A nice proof that used Kashiwara's crystal bases was 
given in \cite{B00}, also that in the generic case. The proof
given here works for all roots of unity $q$ and all generic $q$ except perhaps a finite number of algebraic numbers.
\begin{cor} \textbf{i)} (Separation of variables.) There is a filtration
of the form $L_i \ot \ZHC$ on $\Uqf$ for some $G_q$-submodules
$L_i \subset \Uqf$ and a subspace $\mathcal{H}_q \subset \gr \Uqf$
such that multiplication gives an isomorphism $ \mathcal{H}_q \ot
\ZHC \isoto \gr \Uqf$. In particular, $\Uqf$ is free over $\ZHC$.
\textbf{ii)} $\mathcal{H}_q$ is a direct sum of
$\Ho^0_q(\lambda)$'s. The multiplicity of $\Ho^0_q(\lambda)$ in this
sum equals the dimension of the $0$'th weight space  $\Ho^0_q(\lambda)^0 \subset \Ho^0_q(\lambda)$.
\end{cor}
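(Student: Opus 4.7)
My plan is to deduce the corollary from the isomorphism $\phi_q : \tU = \Uqf \ot_{\ZHC} \C\Lambda \isoto \Indd \Mq$ of Corollary \ref{correct global sections}, together with the higher vanishing $R^{>0}\Indd \Mq = 0$ of Theorem \ref{higher vanishing of Verma}.

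For part (i), I would first use the PBW decomposition $\Mq \cong \Uq(\nbar) \ot \C\Lambda$ to view $\Mq$ as a $B_q$-$\C\Lambda$-bimodule that is free over $\C\Lambda$; the right $\C\Lambda$-multiplication commutes with the adjoint $B_q$-action because $\C\Lambda = \Mq^0$ lies in $\Mq^{B_q}$. A spectral-sequence argument in the spirit of Lemma \ref{spectral sequence lemma}, combined with $R^{>0}\Indd \Mq = 0$, then yields that $\Indd \Mq$ is $\C\Lambda$-free through $\alpha$; transported along $\phi_q$, the bimodule $\tU$ is $\C\Lambda$-free. Since $\C\Lambda$ is a free module of finite rank over $\C\Lambda^{\tW} = \chi(\ZHC)$ (a classical invariant-theoretic fact for $\tW$ acting on the lattice algebra), faithfully flat descent gives that $\Uqf$ is $\ZHC$-free. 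A $G_q$-equivariant graded lift $\mathcal{H}_q \subset \gr \Uqf$ of the quotient $\gr \Uqf / (\ZHC_+ \cdot \gr \Uqf)$, with respect to the adjoint filtration, then produces the filtration pieces $L_i \ot \ZHC$ and the isomorphism $\mathcal{H}_q \ot \ZHC \isoto \gr \Uqf$ via graded Nakayama.

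For part (ii), I would pin down the $G_q$-character of $\mathcal{H}_q$ by computing that of $\Indd \Mq$. Filter $\Uq(\nbar)$ by a $B_q$-stable filtration refining the weight decomposition, with one-dimensional graded pieces $\C_\nu^{p(\nu)}$ for $\nu \in (\Lambda_r)_+$, where $p(\nu) = \dim \Uq(\nbar)^\nu$ is the Kostant partition function; lift this to a $B_q$-$\C\Lambda$-filtration of $\Mq$. The higher vanishing $R^{>0}\Indd \Mq = 0$ combined with the footnote observation $b_{q,\mu} = b'_{q,\mu}$ from the proof of Theorem \ref{correct global sections root of unity} legitimizes an Euler-characteristic computation: Borel-Weil-Bott evaluates each summand $\Indd(\C\Lambda \ot \C_\nu)$, and after $\W$-orbit cancellations one recovers the classical Kostant multiplicity formula, so that $\Ho^0_q(\lambda)$ appears in $\Indd \Mq$ with multiplicity $|\tW| \cdot \dim \Ho^0_q(\lambda)^0$. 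Dividing by the rank $|\tW|$ of $\C\Lambda$ over $\ZHC$ then yields the claimed multiplicity $\dim \Ho^0_q(\lambda)^0$ of $\Ho^0_q(\lambda)$ in $\mathcal{H}_q$.

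The main obstacle I anticipate is the BWB/Euler-characteristic step: one must identify the $G_q$-action on $\Indd \Mq$ transported via $\phi_q$ with the natural adjoint-type action used in the BWB computation, and push the cancellations on the graded pieces (each of which may individually have nonvanishing higher $R^i\Indd$) through to the Kostant formula. The footnote observation of Theorem \ref{correct global sections root of unity} and the $\C\Lambda$-linearity of $\phi_q$ via $\alpha$ are the crucial technical inputs that make this control possible.
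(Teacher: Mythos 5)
Your proposal takes a genuinely different route from the paper in both halves, and each half has a gap. For \textbf{i)}, the paper does not pass through freeness of $\Indd \Mq$ over $\C\Lambda$ at all: it filters $\Mq$ by a weight basis of $\Uq(\nbar)$ so that $\gr_F \Mq = \oplus_\mu \C_{-\mu}^{m_\mu} \ot \C\Lambda$, applies $\Indd$, uses the multiplicity counts from the proof of the main theorem to identify $\gr_{\Ind F}\tU$ with $\Indd \gr_F \Mq$, reads off $\mathcal{H}_q$ as the complement of the $\C\Lambda$-factor, and obtains the filtration $L_i \ot \ZHC$ as the $\tW$-invariants $((\Ind F)_i)^{\tW}$. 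In your version, the assertion that ``a spectral-sequence argument in the spirit of Lemma \ref{spectral sequence lemma}, combined with $R^{>0}\Indd \Mq = 0$, yields that $\Indd \Mq$ is $\C\Lambda$-free'' is not justified: that lemma concerns finite generation over $\Uqf$, and freeness of $\Mq$ over $\C\Lambda$ plus vanishing of higher induction does not formally propagate to freeness of $\Indd \Mq$. Making this step honest (regular sequences $x_i-\lambda_i$ from Step c) of Theorem \ref{higher vanishing of Verma}, constancy of fibre dimensions, projective implies free over $\C\Lambda^{\tW}$) requires essentially the weight-space multiplicity computations you are trying to bypass. Moreover, the statement demands a filtration by subspaces of the form $L_i \ot \ZHC$ with $L_i$ a $G_q$-submodule; graded Nakayama applied to an unspecified ``adjoint filtration'' produces a generating subspace, not such a filtration, so part of the assertion is left unproved.

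For \textbf{ii)} your two inputs are mutually inconsistent. The footnote $b_{q,\mu} = b'_{q,\mu}$ says that $\gr \Indd \Mql \to \Indd \gr \Mql$ is surjective, which would force the multiplicity of $\Ho^0_q(\lambda)$ in $\mathcal{H}_q$ to be $m_\lambda = \dim \Uq(\nbar)^{-\lambda}$, the Kostant partition function, with no cancellation; your Euler-characteristic computation with Weyl-orbit cancellations gives instead the alternating sum $\sum_{w\in\W}(-1)^{l(w)} m_{w\bullet\lambda} = \dim \Ho^0_q(\lambda)^0$. These agree only if $m_\lambda = \dim \Ho^0_q(\lambda)^0$ for every dominant $\lambda$ in the root lattice, which fails already for $\g = \mathfrak{sl}_3$ and $\lambda = 3\omega_1 = 2\alpha_1+\alpha_2$ (there $m_\lambda = 2$ while the zero weight space of $S^3\C^3$ is one-dimensional). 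The paper's own proof sits on the first horn (it asserts $nat$ is an isomorphism and then invokes Weyl's character formula), whereas your cancellation argument sits on the second and is the one that actually lands on the stated answer; but then you must not also invoke the footnote, and you should note that at a root of unity an identity in the Grothendieck group does not by itself show that $\mathcal{H}_q$ is a \emph{direct sum} of $\Ho^0_q(\lambda)$'s, since the category is not semisimple there. Finally, the factor $|\tW|$ and the division at the end are harmless only if phrased as multiplicities over the base ring ($\C\Lambda$ versus $\ZHC$); as a bare $G_q$-module every $\Ho^0_q(\lambda)$ occurs in $\Indd \Mq$ with infinite multiplicity.
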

\begin{proof}
Let
$m_\mu = \dim \Uq(\nbar)^{-\mu}$. Consider a vector space basis
$v_0, v_1, v_2, \ldots $ of $\Uq(\nbar)$ with the property that
$v_i$ is a weight vector of weight $-\mu_i$ and $\mu_i
> \mu_j \implies i > j$.  Define a filtration $F$ on $\Uq(\bobar) \cong \Mq$ by
$F_i := \operatorname{Span}_\C\{v_0, v_1, \ldots , v_i\} \ot
\C\Lambda$. Then
$$
 \gr_F
\Mq \cong (\oplus_{\mu \in (\Lambda_r)_{+}} \C^{m_\mu}_{-\mu}) \ot
\C\Lambda,
$$
$$
\Ind \gr_F \Mq = (\oplus_{\mu \in (\Lambda_r)_{+}} \Ho^0_q(\mu)^{m_\mu})
\ot \C\Lambda.
$$
(Of course,  $\Ho^0_q(\mu) = 0$ unless $\mu \in \Lambda_+$.) Consider the induced filtration $\Ind F_i$ on $\tU \cong \Indd \Mq$. We get the injection $$\gr_{\Ind F} \tU \cong \gr_{\Ind F} \Ind \Mq \overset{nat}{\to}  \Ind \gr_F \Mq.$$
It follows from the multiplicity computations in the proof of Theorem \ref{correct global sections} that $nat$ is an isomorphism.
Thus we get an isomorphism $$\psi_q: \gr_{\Ind F} \tU \cong (\oplus_{\mu
\in (\Lambda_r)_+} \Ho^0_q(\mu)^{m_\mu}) \ot \C\Lambda.$$ We define
${\mathcal{H}}_q :=  \psi^{-1}_q(\oplus_{\mu \in  (\Lambda_r)_+}
\Ho^0_q(\mu)^{m_\mu})$. $\tW$ acts naturally on $\tU$ and we see that the $\tW$-invariant subspace of each step in the filtration is of the form
${(\Ind F)}^{\tW}_i = L_i \ot \ZHC$, for some $G_q$-invariant subspace $L_i \subset \Uq$.
This proves \textbf{i)}. An application of Weyl's character
formula shows that $m_\lambda = \dim \Ho^0_q(\lambda)^0$, which
proves \textbf{ii).}
\end{proof}
Note that for $q$ generic we have $\Uqf \cong \gr \Uqf$, as
$G_q$-modules, and we may replace the statement of \textbf{i)} by
``${\mathcal{H}}_q \subset \Uqf$ is a $G_q$-submodule such that
${\mathcal{H}}_q \ot \ZHC \cong \Uqf$". This is how \cite{JL94,
B00} and classically \cite{K63} stated the result.

\begin{cor} (Duflo's formula.) Assume that $q$ is generic and let $\lambda \in T_\Lambda$.
Then $\Uql$ acts faithfully on $\Mql$.
\end{cor}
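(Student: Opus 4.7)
The plan is to deduce faithfulness formally from the injectivity of $\pql$, which has already been established (in fact as an isomorphism) in Corollary \ref{correct global sections} for all generic $q$ with $f(q) \neq 0$. The bridge between the two statements is a simple Hopf-algebraic observation: the two-sided ideal $\Ann_{\Uql}(\Mql) \subseteq \Uql$ is stable under the right adjoint action $\ad_r$ of $\Uq$, because for $u$ annihilating $\Mql$ and any $x \in \Uq$ one has $\ad_r(x)(u) = \sum S(x_{(1)}) u x_{(2)}$, and the central factor $u$ still annihilates every vector of $\Mql$.

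Next I would unwind the construction of $\pql$ from Section \ref{Global sections of the sheaf of quantum differential operators on the quantum flag manifold}. If the coaction of $\Oq$ on $u \in \Uqf$ is written as $\sum f_i \ot u_i \in \Oq \ot \Uqf$, so that the $u_i$'s span $\ad_r(\Uq)(u)$, then tracing through the composition $\epsilon$ (coact followed by $\Uqf \twoheadrightarrow \Mq \twoheadrightarrow \Mql$) yields
$$\pql(u) \;=\; \sum f_i \ot (u_i \cdot v_\lambda) \;\in\; (\Oq \ot \Mql)^{B_q} \;=\; \Indd \Mql,$$
where $v_\lambda \in \Mql$ denotes the canonical highest weight generator.

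Combining the two: if $u \in \Uql$ annihilates $\Mql$, then by the first step each $u_i$ also annihilates $\Mql$, so in particular $u_i \cdot v_\lambda = 0$, whence $\pql(u) = 0$. By Corollary \ref{correct global sections}, injectivity of $\pql$ then forces $u = 0$, proving that $\Uql$ acts faithfully on $\Mql$.

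Once Corollary \ref{correct global sections} is in hand there is essentially no difficulty; Duflo's formula is a clean formal consequence. The only point to watch is the convention that here, as throughout the paper, ``$q$ generic'' means generic outside the finite zero locus of the polynomial $f$ supplied by Corollary \ref{correct global sections}. One could in principle push the statement to all generic $q$ by using the weaker injectivity of $\phi_q$ noted in the Remark following that corollary, but that requires checking that the specialization $\Mq \twoheadrightarrow \Mql$ is compatible with the adjoint structure on $\tU$ in the appropriate way, which is not needed for the Corollary as stated.
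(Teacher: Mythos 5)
Your argument is correct and is essentially the paper's own proof: both show that the two-sided ideal $\Ann_{\Uql}(\Mql)$ is $\ad$-stable (using $\Uqres=\Uq$ for generic $q$), unwind $\pql(u)$ as the coaction components of $u$ mapped to $\Mql$ via $u_i\mapsto u_i\cdot v_\lambda$, and conclude $u=0$ from the injectivity of $\pql$ established in Corollary \ref{correct global sections}. Your closing remark about the genericity convention (excluding the zero locus of $f$) matches the scope actually guaranteed by that corollary, so nothing further is needed.
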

\begin{proof} Let $J = \Ann_{\Uql} \Mql$. Observe that $J$ is a two-sided ideal in $\Uql$.
Since $q$ is generic we have $\Uqres = \Uq$ so that $J$ is
actually stable under $ad(\Uqres)$. Let $u \in J$. We have
$coact.(u) = u_1 \ot u_2 \in \Oq \ot \Uql$, where $\langle u_1, v
\rangle  u_2 = ad(v)(u)$, for all $v \in \Uqres$. Here $\langle
\ , \ \rangle$ is the pairing between $\Oq$ and $\Uqres$. We have
$\pql(u) = u_1 \ot \overline{u}_2$ where
$\overline{u}_2$ is the image of $u_2$ in $\Mql$. Thus, $\pql(u) = 0$ and since $\pql$ is injective we
conclude that $u=0$.
\end{proof}
\begin{rem} Here is Duflo's formula for $q$ an $\ell$'th root of unity:
For \emph{unramified} $\mathfrak{m}_\tau \in \mSpec \Z(\Uqf)$ it
is known that $\Uqf/(\mathfrak{m}_\tau) \isoto \End(M_\tau)$,
where $M_\tau$ is the corresponding baby Verma module (see
\cite{BG01}). From this one deduces that
$$\Ann_{\Uqf}(M_\lambda) =
\Uqf \cdot \{\Ker \chi_\lambda, E^{\ell}_\alpha, K_{\ell\mu} -
\lambda(K_{\ell\mu}), \alpha \in \Delta, \mu \in \Lambda\}.$$
\end{rem}

\subsection{Centers at a root of unity.}\label{centers}
\subsubsection{}
Let $q$ be an $\ell$'th root of unity. Let $\uq \subset \Uqres$ be
Lusztig's small quantum group and let $V^{\uq}$ be the set of
$\uq$-invariants in a $\Uqres$-module $V$. Since $\uq$ is the
algebra kernel of the quantum Frobenius map $Fr: \Uqres \to
\U(\g)$ we see that $V^{\uq}$ has a $\g$-action.

Let $\Z = \Z(\Uq)$ be the center of $\Uq$; then $\Z = \Uq^{\uq}$
and, hence, $\Z$ has a $\g$-action, again denoted $ad$ (which is
trivial on $\ZHC$). Let $$\Zl := \C \langle E^{\ell}_\alpha,
F^{\ell}_\alpha, K_{\ell\gamma}; \alpha \in \Delta, \gamma \in \Lambda
\rangle \subset \Z$$ be the $\ell$-center of $\Uq$ and put
$\Z^{(\ell)}_0 := \C \langle E^{\ell}_\alpha, F^{\ell}_\alpha, K_{\ell\gamma};
\alpha \in \Delta, \gamma \in 2\Lambda \rangle \subset \Zl$. Then
$\Z^{(\ell)}_0$ and $\Zl$ are $\g$-module subalgebras of $\Z$ and,
moreover, $\Zl$ is free of rank $2^{\rank \g}$ over $\Z^{(\ell)}_0$ with basis
$J := \{K_\gamma; \gamma = \sum_{ \alpha \in \Delta }
\epsilon_\alpha \ell \omega_{\alpha},\; \epsilon_\alpha \in \{0,1\}
\}$. Let $G_0 \subset G$ be the open Bruhat cell and let $T \subset G_0$ be the torus. Consider the
adjoint $\g$-action on $\BGG(G_0)$.  We have
\subsubsection{}\label{first}
\Prop \emph{The $\g$-module algebras $\BGG(G_0)$ and
$\Z^{(\ell)}_0$ are isomorphic.}
\begin{proof}  In \cite{CKP92}, Theorem 5.5, the commutator action
of $\Uqres$ on $\Z^{(\ell)}_0$ was explicitly calculated. This action and the
adjoint action that we favour are closely related: For each $\alpha \in \Delta$ one constructs $\gamma_\alpha \in
\Lambda$ such that $ad_{E^{(\ell)}_\alpha}( \ ) = K_{\gamma_\alpha}
[E^{(\ell)}_\alpha,\ ]$ and similarly for the ${F^{(\ell)}_\alpha}{}'s$.
The proposition follows from this and the computations in \cite{CKP92}.
\end{proof}

\subsubsection{}\label{cover adios}
Hence we get a $\g$-module algebra inclusion $\BGG(G_0)
\hookrightarrow \Zl$; it becomes an equality after taking
$\g$-integrable parts
\begin{proposition}
$\BGG(G)
= \BGG(G_0)^\fin \cong \Zl(\Uqf) := \Zl \cap \Uqf.$
\end{proposition}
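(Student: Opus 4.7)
The plan is to combine the classical identity $\BGG(G_0)^{\fin}=\BGG(G)$ with Proposition \ref{first}, and then to extend the resulting identification of integrable parts from $\Z^{(\ell)}_0$ to $\Zl$.

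For the classical identity: by Peter--Weyl, matrix coefficients of finite-dimensional $G$-modules under the adjoint action span $\BGG(G)$ and are manifestly $\g$-integrable, giving $\BGG(G)\subseteq\BGG(G_0)^{\fin}$. Conversely, a $\g$-integrable $f\in\BGG(G_0)$ generates a finite-dimensional $G$-subrepresentation whose elements are matrix coefficients of some finite-dimensional $G$-module and hence extend regularly to all of $G$.

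Since $\Zl\subseteq\Z(\Uq)$ and $\uq$ is the algebra kernel of the quantum Frobenius $Fr:\Uqres\twoheadrightarrow\U(\g)$, the adjoint $\Uqres$-action on $\Zl$ factors through $\U(\g)$; hence, for central elements, $\ad(\Uqres)$-integrability coincides with $\g$-integrability, and Proposition \ref{first} yields $\Z^{(\ell)}_0\cap\Uqf\cong\BGG(G_0)^{\fin}=\BGG(G)$.

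The main obstacle is the remaining step: promoting this from $\Z^{(\ell)}_0\cap\Uqf$ to $\Zl\cap\Uqf$, i.e., showing $\Zl\cap\Uqf\subseteq\Z^{(\ell)}_0$. The plan is to use the free $\Z^{(\ell)}_0$-module decomposition $\Zl=\bigoplus_{K_\gamma\in J}\Z^{(\ell)}_0\cdot K_\gamma$ of rank $2^{\rank\g}$ and to show that for every non-trivial basis element $K_\gamma\in J$ with $\gamma\neq 0$, the summand $\Z^{(\ell)}_0\cdot K_\gamma$ contains no non-zero $\g$-integrable element. Concretely, combining the non-integrability of $K_{\ell\omega_\alpha}$ itself (a consequence of $\Uqf\cap\C\Lambda=\C 2\Lambda_+$ together with $\ell\omega_\alpha\notin 2\Lambda_+$ for odd $\ell$) with the Hopf-algebra formula $\ad(u)(zK_\gamma)=\sum\ad(u_{(1)})(z)\,\ad(u_{(2)})(K_\gamma)$ and the explicit formulas of \cite{CKP92} for the $\ad$-action on $\Z^{(\ell)}_0$, one expects to force the $\g$-orbit of any such $zK_\gamma$ with $z\neq 0$ to be infinite-dimensional, completing the argument.
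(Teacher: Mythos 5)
There is a genuine gap: the decisive step of the proposition is exactly the claim that $(\BGG(G_0)\cdot K_\gamma)^{\fin}=0$ for every $\gamma\in J$ with $\gamma\neq 0$, and your proposal leaves this as an expectation rather than a proof. The route you sketch --- non-integrability of $K_{\ell\omega_\alpha}$ plus the Hopf formula $\ad(u)(zK_\gamma)=\sum \ad(u_{(1)})(z)\,\ad(u_{(2)})(K_\gamma)$ --- does not by itself force anything, because the coefficient $z\in \Z^{(\ell)}_0\cong\BGG(G_0)$ is an arbitrary (typically non-integrable) element, so the terms in that sum can interact and one cannot argue factor-by-factor; in general a product of non-integrable elements may well be integrable, and the whole difficulty is to rule out such cancellations inside the coset $\Z^{(\ell)}_0\cdot K_\gamma$. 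The paper resolves precisely this point by a normalization you are missing: after multiplying $f\cdot K_\gamma$ by a suitable invertible (and integrable) element of $\BGG(T)\cong\C\,2\ell\Lambda$ one may assume $f\in\BGG(G)$, and then for $\alpha\in\Delta$ with $\langle\alpha,\gamma\rangle\neq 0$ one checks directly that $\ad^n(E_\alpha)(f\cdot K_\gamma)\neq 0$ for all $n\geq 0$, contradicting integrability. Without this reduction (or an equivalent device, e.g.\ a grading argument mod $2\ell\Lambda$ carried out in detail), the statement ``the $\g$-orbit of $zK_\gamma$ is infinite-dimensional'' is not established.

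A secondary weak point is the first equality $\BGG(G)=\BGG(G_0)^{\fin}$: your deduction ``a finite-dimensional $G$-subrepresentation of $\BGG(G_0)$ consists of matrix coefficients, hence its elements extend to $G$'' is not valid as stated (with the trivial action every function spans a finite-dimensional submodule, yet need not extend). The actual input, which the paper uses, is the geometric fact that $ad_G(G_0)=G$: one extends an integrable $f$ by translating it over the conjugates $gG_0g^{-1}$, which cover $G$, and gluing by density of $G_0$. The remaining parts of your plan (quantum Frobenius reducing $\ad$-integrability on central elements to $\g$-integrability, and the identification $\Z^{(\ell)}_0\cap\Uqf\cong\BGG(G_0)^{\fin}$ via the isomorphism $\BGG(G_0)\cong\Z^{(\ell)}_0$) agree with the paper and are fine.
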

\begin{proof} The first equality holds since $G_0$ is open and dense in $G$ and $ad_G(G_0) = G$.
Next we have the $\g$-module decomposition $$\Zl = \oplus_{\gamma \in J}
\, \BGG(G_0)\cdot K_\gamma.$$
We must show that $(\BGG(G_0)\cdot
K_\gamma)^{\fin} \neq 0 \implies \gamma = 0$, for $\gamma \in J$.
Let $0 \neq f \in \BGG(G_0)$ be such that $f \cdot K_\gamma \in
(\BGG(G_0) \cdot K_\gamma)^{\fin}$. By multiplying $f$ with a
suitable invertible element of $\BGG(T)$,  we can assume that $f \in \BGG(G)$.

Note that under the isomorphism of Proposition \ref{first},
$\BGG(T) \cong \C 2\ell \Lambda$ as $\g$-module algebras. If $\gamma
\neq 0$, we may pick $\alpha \in \Delta$ such that $\langle
\alpha, \gamma \rangle \neq 0$, and it follows readily that
$ad^n(E_\alpha)(f \cdot K_\gamma) \neq 0$, for all $n  \geq 0$,
which contradicts the integrability of $ad(E_\alpha)$ on $f \cdot
K_\gamma$. Hence, $\gamma = 0$.
\end{proof}

\subsubsection{}\label{no cover implication}
The composition $G_0 \hookrightarrow G \to G // G = T/ \W$ gives the
inclusions ${\Zl \cap \ZHC} \cong {{\C \ell\Lambda}^{\tW}} = \BGG(T)^\W \subset \BGG(G_0) \subset \Zl$
and it is known that $ \Z = \Zl \ot_{\Zl \cap \ZHC} \ZHC \cong \Zl
\ot_{{\C \ell\Lambda}^{\tW}} \C \Lambda^{\tW}$. Hence Proposition
\ref{cover adios} gives
\begin{cor}
$\Z(\Uqf) = (\Uqf)^{\uq} \cong \BGG(G) \ot_{{\C \ell\Lambda}^{\tW}} \C
\Lambda^{\tW}, \ \, \Z(\tU) = \tU^{\uq} \cong \BGG(G) \ot_{{\C
\ell\Lambda}^{\tW}} \C \Lambda.$
\end{cor}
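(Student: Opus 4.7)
The plan is to take $\uq$-invariants of the decomposition $\Z = \Zl \ot_{\Zl \cap \ZHC} \ZHC$ recalled in \ref{no cover implication} and apply Proposition \ref{cover adios}.

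First I would reduce $\uq$-invariants to centers in both cases: $(\Uqf)^{\uq} = \Uq^{\uq} \cap \Uqf = \Z \cap \Uqf$, and a routine argument using the identity $\Uq = \Uqf \ot_{\C 2\Lambda_+} \C\Lambda$ of Section \ref{commuting some stuff} identifies this with $\Z(\Uqf)$. For the extended algebra $\tU = \Uqf \ot_{\ZHC} \C\Lambda$ the second factor is commutative and $\uq$-trivial, so $\tU^{\uq} = (\Uqf)^{\uq} \ot_{\ZHC} \C\Lambda = \Z(\Uqf) \ot_{\ZHC} \C\Lambda$, and this coincides with $\Z(\tU)$.

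Next I would apply the $\fin$-functor to $\Z = \Zl \ot_{\Zl \cap \ZHC} \ZHC$. Since $\ZHC$ (and hence $\Zl \cap \ZHC$) is central in $\Uq$, its adjoint $\g$-action is trivial, so the action on the tensor product only affects the $\Zl$-factor. Using the $J$-decomposition $\Zl = \bigoplus_{\gamma \in J} \BGG(G_0) \cdot K_\gamma$ from the proof of Proposition \ref{cover adios}, together with the vanishing $(\BGG(G_0) \cdot K_\gamma)^{\fin} = 0$ for $\gamma \neq 0$, the $\fin$-part equals $(\Zl)^{\fin} \ot_{\Zl \cap \ZHC} \ZHC = \BGG(G) \ot_{\Zl \cap \ZHC} \ZHC$. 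Substituting the identifications $\Zl \cap \ZHC \cong \C\ell\Lambda^{\tW}$ from \ref{no cover implication} and $\ZHC \cong \C\Lambda^{\tW}$ from Harish-Chandra yields the first formula.

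The second formula would then follow by base change along $\ZHC \hookrightarrow \C\Lambda$: applying $(-) \ot_{\ZHC} \C\Lambda$ to the first isomorphism collapses the middle factor $\C\Lambda^{\tW}$ and produces $\BGG(G) \ot_{\C\ell\Lambda^{\tW}} \C\Lambda$. The main subtlety that I would verify carefully is the compatibility of the $\fin$-functor with the tensor product $(-) \ot_{\Zl \cap \ZHC} \ZHC$; this is not formal in general, but works here because $\ZHC$ carries trivial $\uq$-action and is flat over $\Zl \cap \ZHC$, so the $J$-decomposition of $\Zl$ persists after base change and the integrable summand can be isolated component by component.
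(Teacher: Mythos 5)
Your argument is correct and follows essentially the same route as the paper, which simply combines the decomposition $\Z = \Zl \ot_{\Zl \cap \ZHC} \ZHC$ with Proposition \ref{cover adios}; you merely make explicit the two points the paper leaves implicit, namely the identification of $\uq$-invariants with centers and the compatibility of taking integrable parts with the base change $\ot_{\Zl \cap \ZHC} \ZHC$ (and then $\ot_{\ZHC}\C\Lambda$ for $\tU$).
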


Next we have
\subsubsection{}\label{check iso at root}
\Prop \emph{The isomorphisms $\phi_q$ and $\pql$ of Theorem \ref{correct global sections} restricts to isomorphisms
$\phi^{\uq}_q: \tU^{\uq} \isoto (R\Indd  \Mq)^{\uq}$ and $(\pql)^{\uq}: (\Uql)^{\uq} =  \Z(\Uql)  \isoto (R\Indd  \Mql)^{\uq}$.}

\begin{proof} This follows of course by applying $\uq$-invariants to our main results from Section \ref{Global sections of the sheaf of quantum differential operators on the quantum flag manifold}.
However, this proposition is used to prove those results so we must give an independent proof.

Let $\bq := \Uqres(\bo) \cap \uq$. By Corollary \ref{no cover implication}, $\tU^{\uq}  \cong \BGG(G) \ot_{{\C
\ell\Lambda}^{\tW}} \C \Lambda$ and it follows that $$\Mq^{\bq} = \Jm\{\Uq^{\uq} \hookrightarrow
\Uq \to \Mq\}  \cong  \BGG(B) \ot_{\C 2\ell \Lambda} \C \Lambda,$$ with
the adjoint $B$-action on $\BGG(B)$ (cf \cite{BK08}). 

For any $B_q$-module $V$,  $V^{\bq}$ is a $B$-module, and by \cite{GK93} there  is a natural isomorphism
$R\Ind^G_B \, V^{\bq} \isoto (R\Ind^{G_q}_{B_q} \, V)^{\uq}$.

Thus, in particular
$$(R\Indd  \Mq)^{\uq} \cong R\Ind^G_B \, \Mq^{\bq} \cong (R\Ind^G_B  \, \BGG(B)) \ot_{\C 2\ell \Lambda}
\C \Lambda \cong$$ 
$$(\BGG(G) \ot_{\C \ell\Lambda^{\tW}} \C 2\ell\Lambda)
\ot_{\C 2\ell\Lambda} \C \Lambda = \BGG(G) \ot_{{\C \ell\Lambda}^{\tW}}
\C \Lambda,$$ where the third isomorphism is proved e.g. in
\cite{S82}. Following the maps we conclude that $\phi^{\uq}_q$ is
an isomorphism. That $(\pql)^{\uq}$ is an isomorphism is proved similarly.
\end{proof}
We finish with
\subsubsection{}
\Prop \emph{$\Uqf$ is a free $\BGG(G)$-module of rank   $\ell^{\dim \g}$ and $\tU \cong \Indd \Mq$ are free $\BGG(G)$-modules of rank $| \tW | \cdot \ell^{\dim \g}$.}

\begin{proof}
Since $\tU$ is free over $\Uqf$ of rank $| \tW |$ it suffices to prove the first assertion.
By \cite{DP92}, $\Uq$ is free over $\Zl(\Uq)$ of rank  $\ell^{\dim
\g}$. Let $\eta$ be the composition $\mSpec \Zl \to G_0
\hookrightarrow G = \mSpec \Zl(\Uqf)$. It is clear that for $\m
\in \Jm \eta = G_0$ we have $\Uqf/(\m) \cong \Uq/(\m)$ and hence
$\dim \Uqf/(\m) = \ell^{\dim \g}$.  Since any $\m \in G$
can be moved into $G_0$ by the adjoint $G$-action and $\Uqf$ is
a $G_q$-equivariant $\BGG(G)$-module, it follows that $\dim
\Uqf/(\m) = \ell^{\dim \g}$, for all $\m \in G$. Since $\BGG(G)$ is reduced this implies that $\Uqf$ is projective over $\BGG(G)$, e.g. \cite{H77}.
Since projective $\BGG(G)$-modules of rank $> \dim \g$ are free, see \cite{BG02, MR88},  we are done.
\end{proof}

\section{Appendix}
\subsection{{}} Here we collect some basic facts that we need.

\subsubsection{}\label{specialSemisimple}
\Lemma \emph{Let $V,W$ be $G_\A$-modules and let $\Theta_q: \Hom_{G_\A}(V,W) _q \to \Hom_{G_q}(V_q,W_q)$ be the natural map, for $q \in \C^*$.}

\emph{\textbf{i)} If $W$ is $\A$-flat then $\Theta_q$ is
injective. If moreover $V$ is $\A$-free of finite rank and $q$ is
generic, then $\Theta_q$ is an isomorphism.}

\emph{\textbf{ii)} Assume that $W$ is
$\A$-free. Then  $ \dim (W_q)^\mu =  \dim (W_1)^\mu$ for generic
$q$ and $ \dim (W_q)^\mu \geq \dim (W_1)^\mu$ for all roots of
unity $q$.}

\begin{proof}  For $f \in  \Hom_{G_\A}(V,W)$ denote by $f_q$ its image in
$\Hom_{G_\A}(V,W) _q$. If $\Theta_q(f_q) = 0$, we have by definition $f(V) \subseteq (t-q)W$. Then, since $W$ is $\A$-flat = $\A$-torsion free, we can define
 $f' \in \Hom_{G_A}(V,W)$ by $f' := (t-q)^{-1}f$. Thus $f = (t-q)f'$, so we get $f_q = 0$; hence $\Theta_q$ is injective.

\smallskip

Differentiation identifies $G_\A$-modules with a full subcategory of
$\UA\MOD$ and, for $q$ generic, $G_q$-modules with a full subcategory
of $\Uq\MOD$. Thus, it suffices to show that the natural map
$\Theta'_q: \Hom_{\UA}(V,W) _q \to \Hom_{\Uq}(V_q,W_q)$ is an
isomorphism.  Both functors $\Hom_{\UA}(V, \  ) _q$ and
$\Hom_{\Uq}(V_q, \  )$ commute with direct limits; hence we can
assume that $W$ is finitely generated over $\A$.

The short exact sequence $W \overset{t-q}{\hookrightarrow} W
\overset{\pi}{\twoheadrightarrow} W_q$ yields an exact sequence
$$
(*) \  \  \Hom_{\UA}(V, W)  \overset{\pi_*}{\to} \Hom_{\UA}(V, W_q) \to
\Ext^1_{\UA}(V, W) \overset{t-q}{\to} \Ext^1_{\UA}(V, W)  \to
\Ext^1_{\UA}(V, W_q).
$$
Under the natural identification $ \Hom_{\UA}(V_q, W_q)   = \Hom_{\UA}(V, W_q)$ we have  $\Theta'_q(f_q) = \pi_*(f) $, for $f \in \Hom_{\UA}(V, W)$; thus,
$\Coker \Theta'_q =
\Ann_{\Ext^1_{\UA} (V,W)}(t-q)$. Let $P_\bullet \to V$ be a free
resolution of $V$ in $\UA\text{-}\mood$. Clearly, $P_\bullet$
splits in $\A\text{-}\mood$; therefore $( P_\bullet)_q \to V_q$ is
a free resolution in $\Uq\text{-}\mood$ and so $\Ext^1_{\UA}(V,
W_q)  \cong \Ext^1_{\Uq}(V_q,W_q).$

This group vanishes since the category of finite dimensional $\Uq$-modules is semi-simple.
Thus,  $t-q$ is surjective  on $\Ext^1_{\UA}(V, W)$.  Since  $V$ and $W$ are f.g. over $\A$ it follows that $\Ext^1_{\UA}(V, W)$ is f.g. over $\A$ and therefore multiplication by $t-q$
must also be injective on  $\Ext^1_{\UA}(V, W)$.  Thus $\Coker \Theta_q = 0$. This proves \textbf{i)}.

\medskip

\noindent
\textbf{ii)} As the functors $(( \ )_q)^{\mu} = \Hom_{T_q}(\C_{\mu}, \ )$ and $((  \ )^{\mu})_q = \Hom_{T_\A}(\A_\mu, \ )_q$ commute with direct limits we may assume that $W$ is finitely generated. (Here $\C_{q,\mu}$ and $\A_\mu$ are the rank one representations defined by $\mu$.) Since $W^\mu$ is free over $\A$ we see that $m := \dim_\C (W^\mu)_q$ is constant in $q$, for all $q$.
The argument that we used to establish injectivity of $\Theta_q$ again shows that
$$m \leq \dim (W_q)^\mu, \hbox{ for all } q.$$

Since $\Ext^1_{T_\A}(\A_\mu, W)$ is finitely generated over $\A$ it is countably generated over $\C$ which implies that the number of $q$ such that $t-q$ is not injective on it is at most countable. Let $q$ be a non-root of unity such that $t-q$ is injective on $\Ext^1_{T_\A}(\A_\mu, W)$; then a long exact sequence analogous to $(*)$ shows $m = \dim (W_q)^\mu$.

Hence, if we can prove the first assertion we get $m = \dim (W_1)^\mu$ and the second assertion will thus follows.

For generic $q$ we have
$$
W_q = \oplus_{\nu \in \Lambda_+} (\Ho^0_q(\nu))^{m_{q,\nu}},
$$
where $m_{q,\nu} = \dim \Hom_{G_q}(\Ho^0_q(\nu), W_q)$.  But by \textbf{i)} we then get $m_{q,\nu} = \dim \Hom_{G_\A}(\Ho^0_\A(\nu), W)_q$ which is independent of $q$.
As the numbers $\dim (\Ho^0_q(\nu))^\mu$ are independent of $q$ (indeed, they are given by Weyl's formula), we have proved \textbf{ii)}.
\end{proof}
\subsubsection{}\label{specialSchur}
\Lemma \emph{Let $R$ be a noetherian $\C$-algebra such that $\dim_\C R$ is countable and let  $M$ be a
finitely generated left $R$-module. Let $x \in \Aut_R(M)$ and assume that $(x-a)M =  M$ for all $a \in \C^*$.
Then $M = 0$.}

\begin{proof} Assume that $M \neq 0$.  The hypothesis implies that $\dim_\C M$ is at most countable. Since $\C$ is uncountable and algebraically closed, Schur's lemma implies there
is a $a \in \C$ such that $x - a$ is not invertible on $M$. Then $a \neq 0$ since $x$ is invertible on $M$.
Since any surjective endomorphism of a noetherian
object must be injective, we conclude that $x-a$
is not surjective on $M$. This gives the contradiction.
\end{proof}

\subsubsection{}\label{spectral sequence lemma}
\Lemma \emph{Let $M$ be a noetherian object in the category of $B_\A$-equivariant $\UAf$-modules. Then $R^{>0}\Indd M$ is f. g. over
$\UAf$.}
\begin{proof}
The
$\UAf$-module structure on $\Indd M$ is given by the
composition $\UAf \ot \Indd \MA \cong \Ind (\UAf \ot \MAl) \to \Indd
\MAl$.

The noetherian assumption  is equivalent to $M$ being f.g. $\UAf$-module.
Using that $\UAf$ is
noetherian it is easy to inductively construct a (possibly
infinite) resolution $F_\bullet \to M$ in the category of
$B_\A$-equivariant $\UAf$-modules, where $F_i = \UAf \ot V_i$ and
$V_i$ is a $B_\A$-module which is free of finite rank over $\A$.

We have $R\Indd \UAf \ot V_i = \UAf \ot R\Indd V_i$, by the tensor
identity, and each $R^j\Indd V_i$ is a finitely generated
$\A$-module. Take injective resolutions of each $F_i$ and apply
$\Ind$ to the corresponding double complex. This gives a spectral
sequence whose $E_2$-terms are given by $E^2_{p,q} = \UAf \ot R^p
\Indd V_q$. Moreover, since $\Ind$ has finite cohomological
dimension there are only finitely many non-zero
$E^2_{p+j,p}$-terms for fixed $j \geq 0$. Since the $E^\infty_{p+j,p}$-terms
occur as $\gr$ of a filtration on $R^j\Indd M$ we are done.
\end{proof}

\end{document}